\newtheorem{theorem}{Theorem}
\newtheorem{proposition}[theorem]{Proposition}
\newtheorem{lemma}[theorem]{Lemma}
\newtheorem{corollary}[theorem]{Corollary}
\newtheorem*{theorem*}{Theorem}
\newcommand{\cF}{{\mathcal F} }
\newcommand{\cO}{{\mathcal O} }
\numberwithin{equation}{section}
\begin{document}

\baselineskip=16pt

\title{Deligne pairing and determinant bundle}

\author[I. Biswas]{Indranil Biswas}

\address{School of Mathematics, Tata Institute of Fundamental
Research, Homi Bhabha Road, Mumbai 400005, India}

\email{indranil@math.tifr.res.in}

\author[G. Schumacher]{Georg Schumacher}

\address{Fachbereich Mathematik und Informatik,
Philipps-Universit\"at Marburg, Lahnberge,
Hans-Meerwein-Strasse, D-35032
Marburg, Germany}

\email{schumac@mathematik.uni-marburg.de}

\author[L. Weng]{Lin Weng}

\address{Graduate School of Mathematics, Kyushu University
Fukuoka, 819-0395, Japan}

\email{weng@math.kyushu-u.ac.jp}

\subjclass[2000]{14F05, 14D06}

\keywords{Deligne pairing, determinant line bundle, Quillen metric}

\date{}

\begin{abstract}
Let $X \,\longrightarrow\, S$ be a smooth projective surjective
morphism, where $X$ and $S$ are integral schemes over $\mathbb C$.
Let $L_0\, , L_1\, , \cdots \, , L_{n-1}\, , L_{n}$ be line bundles
over $X$. There is a natural isomorphism
of the Deligne pairing $\langle L_0\, , \cdots\, ,L_{n}\rangle$
with the determinant line bundle ${\rm Det}(\otimes_{i=0}^{n} (L_i-
{\mathcal O}_{X}))$.
\end{abstract}

\maketitle

\section{Introduction}

Let $X \,\longrightarrow\, S$ be a smooth family of complex projective
curves parameterized by an integral scheme $S/\mathbb C$. Let $L_0$ and
$L_1$ be line bundles over $X$. In \cite{De}, P. Deligne associated to
this data a line bundle $\langle L_0\, , L_1\rangle$ over
the parameter space $S$. This
construction is now known as the \textit{Deligne pairing}.
S. Zhang extended the Deligne pairing to
arbitrary relative dimension \cite{Zh}.
Let $S$ and $X$ be integral schemes over $\mathbb C$, and let
\begin{equation}\label{e1}
f\, :\, X \,\longrightarrow\, S
\end{equation}
be a smooth projective surjective morphism. Let $n$ be the dimension of
the fibers of $f$. Take algebraic line bundles $L_0\, , L_1\, , \cdots
\, , L_{n-1}\, , L_{n}$ over $X$. The Deligne pairing, \cite{Zh}, is a
line bundle
$$
\langle L_0\,, \cdots\, ,L_{n}\rangle\, \longrightarrow\, S
$$
(the construction is briefly recalled in Section \ref{sec2}).
The map
$$
\text{Pic}(X)^{n+1}\,\longrightarrow\, \text{Pic}(S)
$$
defined by $(L_0\, , \cdots \, , L_{n})\,\longmapsto\, \langle
L_0\,, \cdots\, ,L_{n}\rangle$ is symmetric, and
it is bilinear with respect to the group structure defined by
the tensor product of line bundles and
dualization; it is also compatible with base change.

The Deligne pairing has turned out to be very useful; see
\cite{PRS}, \cite{PS}, \cite{Ga}, \cite{El}, \cite{Fr}.

Given a locally free coherent sheaf $F$ on $X$, we have a line bundle 
${\rm Det}(F)$ on $S$ (see \cite{KM}, \cite{BGS}). This extends to a 
homomorphism to $\text{Pic}(S)$ from the Grothendieck group of locally 
free coherent sheaves on $X$.

The aim of this note is to announce the following:

\begin{theorem*}
There is a canonical isomorphism
$\langle L_0\, , \cdots\, ,L_{n}\rangle
\,\longrightarrow\, {\rm Det}(\otimes_{i=0}^{n} (L_i-
{\mathcal O}_{X}))$.
\end{theorem*}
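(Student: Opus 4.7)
The plan is to proceed by induction on the relative dimension $n$, establishing simultaneously both the main isomorphism and a companion \emph{vanishing lemma}: for $f : X \to S$ smooth projective of relative dimension $n$ and line bundles $M_0,\ldots,M_m$ on $X$ with $m \ge n+1$, the line bundle ${\rm Det}(\otimes_{i=0}^m (M_i - {\mathcal O}_X))$ admits a canonical trivialization over $S$, naturally in the $M_i$ and compatibly with base change. The vanishing lemma at relative dimension $n-1$ is exactly what eliminates an extra factor appearing in the inductive step of the main theorem.

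Base case $n=0$: $f$ is finite \'etale. The identification ${\rm Det}(f_* L) = N_{X/S}(L) \otimes {\rm Det}(f_* {\mathcal O}_X)$ yields ${\rm Det}(L - {\mathcal O}_X) = N_{X/S}(L)$, which is also Zhang's Deligne pairing of a single line bundle. For the vanishing lemma in this case, expand $\otimes_{i=0}^m(M_i - {\mathcal O}_X) = \sum_{I\subseteq\{0,\ldots,m\}}(-1)^{m+1-|I|}\otimes_{i\in I}M_i$ in $K^0(X)$ and apply ${\rm Det}$; multiplicativity of the norm produces $\bigotimes_i N(M_i)^{e_i}$ with $e_i = \sum_{I\ni i}(-1)^{m+1-|I|} = 0$ when $m \ge 1$, giving a canonical trivialization.

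Inductive step: after a local reduction on $S$ to the case in which $L_n$ has a section $\sigma$ with smooth zero divisor $\iota:D\hookrightarrow X$ of relative dimension $n-1$ (a Bertini argument after twisting by positivity, combined with the bilinearity of both sides), the Koszul sequence $0 \to L_n^{-1} \to {\mathcal O}_X \to \iota_*{\mathcal O}_D \to 0$ and the projection formula yield the $K^0(X)$-identity
$$\otimes_{i=0}^n(L_i - {\mathcal O}_X) \;=\; \iota_*\bigl(L_n|_D \otimes G\bigr), \qquad G \;:=\; \otimes_{i=0}^{n-1}(L_i|_D - {\mathcal O}_D).$$
Splitting $L_n|_D \otimes G = G + (L_n|_D - {\mathcal O}_D)\otimes G$ in $K^0(D)$ and applying ${\rm Det}$, combined with ${\rm Det}_{X/S}(\iota_*(-)) = {\rm Det}_{D/S}(-)$, gives
$${\rm Det}_{X/S}\bigl(\otimes_{i=0}^n(L_i - {\mathcal O}_X)\bigr) \;\cong\; {\rm Det}_{D/S}(G) \,\otimes\, {\rm Det}_{D/S}\bigl(\otimes_{i=0}^n(L_i|_D - {\mathcal O}_D)\bigr).$$
By the inductive main isomorphism on $D/S$, the first factor equals $\langle L_0|_D,\ldots,L_{n-1}|_D\rangle_{D/S}$, which coincides with $\langle L_0,\ldots,L_n\rangle_{X/S}$ by Zhang's inductive construction of the Deligne pairing. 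The second factor involves $n+1 = (n-1)+2$ line bundles on $D/S$ of relative dimension $n-1$, so the inductive vanishing lemma furnishes a canonical trivialization. The vanishing lemma at relative dimension $n$ itself follows by the same reduction, both factors on the right being canonically trivial by the inductive vanishing lemma.

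The main obstacles I anticipate are two interrelated independence questions: (i) the isomorphism is initially constructed after choosing a section $\sigma$ of $L_n$, and one must verify that the discrepancy for two competing sections $\sigma,\sigma'$ with ratio $\phi = \sigma/\sigma'$ precisely matches the transformation cocycle built into Zhang's Deligne pairing; (ii) to reduce to the case in which $L_n$ admits such a section, one invokes bilinearity of ${\rm Det}(\otimes_{i=0}^n(L_i - {\mathcal O}_X))$ in $L_n$, which itself depends on the vanishing lemma at the \emph{same} relative dimension $n$ (a surplus $\otimes$ of $n+2$ classes $L_i-{\mathcal O}_X$ appears when expanding $(L_n L_n' - 1)$). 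The cleanest way to manage (ii) is to prove the vanishing lemma at relative dimension $n$ first (by the above induction, applied to sufficiently positive line bundles), then deduce bilinearity, then bootstrap to arbitrary $L_n$ via very-ample resolutions, and only then invoke the section argument for the main theorem. Obstacle (i) should yield to a Weil-reciprocity / Deligne--Riemann--Roch computation identifying both discrepancies with a common $f_*(\phi)$-type expression; once established, the local isomorphisms patch to a canonical global one, and base-change compatibility follows automatically from that of each ingredient.
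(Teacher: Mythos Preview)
Your outline is sound, but it follows a genuinely different route from the paper's argument.

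The paper does not argue by induction on the relative dimension. Instead it works directly at relative dimension $n$: for each $L_i$ it chooses effective divisors $D_i^{+},D_i^{-}$ with $L_i\cong\mathcal O_X(D_i^{+}-D_i^{-})$ in sufficiently general position, and uses the $K$-theory identity of Lemma~\ref{gro} to expand $\otimes_{i=0}^n(L_i-\mathcal O_X)$ as an alternating sum of sheaves supported on intersections of these divisors. Over a dense open $S_0\subset S$ those intersections of length $n+1$ are empty, so the expansion produces an explicit trivialization $\lambda_1$ of the determinant line there. Replacing $D_0^{+}-D_0^{-}$ by $D_0^{+}-D_0^{-}+(g)$ gives another trivialization $\lambda_2$, and the heart of the proof is Lemma~\ref{l-t}, which computes the transition function $\lambda_2\otimes\lambda_1^{*}$ as the same product of norms $\prod_\sigma\mathrm{Norm}_{Y_\sigma/S'}(g)^{n_\sigma}$ that defines the Deligne pairing in \eqref{trans}. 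Matching local frames with matching transition cocycles then yields the canonical isomorphism in one stroke.

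By contrast, your plan slices $X$ by a section of $L_n$, reduces to relative dimension $n-1$ via Koszul and the projection formula, and invokes a companion ``vanishing lemma'' to kill the surplus factor. That vanishing statement is exactly Corollary~\ref{cor1} of the paper, but there it is deduced \emph{from} the theorem rather than used to prove it. Your approach is more structural and makes the compatibility with Zhang's inductive definition of $\langle L_0,\ldots,L_n\rangle$ transparent; the paper's approach is more computational but delivers canonicity immediately, since the isomorphism is specified by an explicit correspondence of generators, with no choices of sections to be reconciled. The independence issues you flag as obstacles (i) and (ii) are precisely the price of the inductive route; the paper sidesteps them by never privileging a single $L_i$ and instead treating all $n+1$ divisorial presentations symmetrically from the outset.
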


A theorem connecting Deligne pairing to dtereminant line bundle
is proved in \cite{PRS} (see \cite[p. 476, Theorem 1(1)]{PRS}).

If each $L_i$, $i\,\in\, [0\, ,n]$, is equipped with a $C^\infty$
hermitian structure $h_i$, then $\langle L_0\, , \cdots\, ,L_{n}\rangle$
inherits a hermitian structure \cite{De}, \cite{Zh}. On the other hand,
the hermitian structures $h_1\, ,\cdots\, ,h_n$, the trivial hermitian
structure on the trivial line bundle ${\mathcal O}_{X}$,
and a relative K\"ahler structure on $X$ together define a
hermitian structure on the determinant bundle ${\rm Det}(\otimes_{i=0}^{n}
(L_i- {\mathcal O}_{X}))$ according to \cite{BGS}, \cite{Qu}. The
curvatures of $\langle L_0\, , \cdots\, ,L_{n}\rangle$ and ${\rm
Det}(\otimes_{i=0}^{n} (L_i- {\mathcal O}_{X}))$ coincide (see Proposition
\ref{thm2}). Finally we observe that the Weil--Petersson metric for
families of canonically polarized varieties can be interpreted as the
curvature form of a certain Deligne pairing.

After an initial version was written by the first two authors, the
referee pointed out that this work was also done by the third author.
We thank the referee for this.

\section{A canonical isomorphism}\label{sec2}

We continue with the notation of the introduction.
Let $L_0\, , L_1\, , \cdots
\, , L_{n-1}\, , L_{n}$ be line bundles over $X$. A local
trivialization of $\langle L_0\, , \cdots\, ,L_{n}\rangle$
over some Zariski open subset $U$ of $S$ is given by fixing a
rational section $l_i$ of $L_i$ for each $i\, \in\, [0\, ,n]$,
such that
the intersection $(\bigcap_{i=0}^n{\rm div}(l_i))\bigcap f^{-1}(U)$
is empty. The generator for $\langle L_0\, , \cdots\,
,L_{n}\rangle\vert_U$ corresponding to $\{l_i\}_{i=0}^n$
is denoted by $\langle l_0,\ldots,l_n\rangle$.

To describe the line bundle $\langle L_0\, , \cdots\, ,L_{n}\rangle$
in terms of these trivializations, we need to give the transition
functions for ordered pairs of such trivializations. Let $g$
be a rational function on $X$, and $i\,\in\, [1\, ,n]$.
Assume that $\bigcap_{j\neq i} {\rm div}(l_j) = \sum_k n_kY_k$
is finite over $S$, and that it has empty intersection with ${\rm
div}(g)$ over
an open subset $U'\, \subset\, U$ (this subset is the intersection
of two open subsets of the above type). Then $\langle
l_0,\ldots,g\, l_i,
\ldots,l_n\rangle$ is another generator. The transition function is
given by the following equation:
\begin{equation}\label{trans}
\langle l_0,\ldots,g\, l_i,\ldots,l_n\rangle = \prod_k {\rm
Norm}_{Y_k/S}(g)^{n_k}\langle l_0,\ldots,l_n\rangle\, .
\end{equation}
It is sufficient to describe this type of transition
functions, because a general transition function is a product of
such functions.

Given a coherent sheaf $V$ on $S$, we have a line bundle ${\rm det}(V)$ on
$S$ (see \cite[Ch.~V, \S~6]{Ko}). For a coherent sheaf $F$ on $X$,
we have a line bundle
\begin{equation}\label{e2}
{\rm Det}(F)\, :=\, {\rm det} f_! F\,=\, \otimes_{i=0}^n {\rm
det}(R^if_* F)^{(-1)^i}
\,\longrightarrow\, S
\end{equation}
\cite{KM}, \cite{BGS}. For coherent sheaf $F_1$ and $F_2$ on $X$,
define
$$
{\rm Det}(F_1+F_2)\, :=\,{\rm Det}(F_1)\otimes {\rm Det}(F_2)
~\text{~and~}~
{\rm Det}(F_1-F_2)\, :=\,{\rm Det}(F_1)\otimes {\rm Det}(F_2)^*\, .
$$

\begin{theorem}\label{thm1}
Let $L_0\, , L_1\, , \cdots \, , L_{n}$ be line bundles over
$X$. Then there is a canonical isomorphism
$$
\varphi\, :\, \langle L_0\, , \cdots\, ,L_{n}\rangle
\,\longrightarrow\, {\rm Det}(\otimes_{i=0}^{n} (L_i-
{\mathcal O}_{X}))\, .
$$
\end{theorem}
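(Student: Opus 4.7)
I would construct $\varphi$ by specifying it on local trivializations of the Deligne pairing, then verifying compatibility with the transition formula~\eqref{trans}. The key tool is a Koszul-type complex built from the chosen rational sections.

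Over a Zariski open $U\subset S$ with rational sections $l_i$ of $L_i$ satisfying $\bigcap_{i=0}^n {\rm div}(l_i) \cap f^{-1}(U) = \emptyset$, I form the total tensor-product complex
\[
\tilde K^\bullet \,:=\, \bigotimes_{i=0}^n \bigl[\mathcal{O}_X\xrightarrow{\,l_i\,} L_i\bigr],
\]
a bounded complex of locally free sheaves on $f^{-1}(U)$ (with $\mathcal{O}_X$ in degree $-1$ and $L_i$ in degree $0$ in each factor). A direct computation shows that its Euler-characteristic class in the Grothendieck group of $f^{-1}(U)$ equals $\otimes_{i=0}^n (L_i - \mathcal{O}_X)$. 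Moreover, at every point of $f^{-1}(U)$ at least one $l_i$ is a unit, making the corresponding two-term factor contractible; hence $\tilde K^\bullet$ is itself contractible, and in particular acyclic.

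Since $\tilde K^\bullet$ is an acyclic bounded complex of locally free sheaves, the Knudsen--Mumford formalism~\cite{KM} produces a canonical trivialization
\[
\eta(l_0,\ldots,l_n) \in {\rm Det}\bigl(\otimes_{i=0}^n(L_i - \mathcal{O}_X)\bigr)\bigr|_U\, ,
\]
and I would set $\varphi\bigl(\langle l_0,\ldots,l_n\rangle\bigr) := \eta(l_0,\ldots,l_n)$. What remains is to verify that under an elementary transition $l_i \mapsto g\,l_i$ (with $g$ a rational function on $X$), $\eta$ transforms by the factor $\prod_k {\rm Norm}_{Y_k/S}(g)^{n_k}$ dictated by~\eqref{trans}, where $\sum_k n_k Y_k = \bigcap_{j\neq i}{\rm div}(l_j)$. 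This reduces to a computation concentrated on the finite-over-$S$ scheme $Z := \bigcap_{j\neq i}{\rm div}(l_j) \cap {\rm div}(g)$: near a smooth point of a component $Y_k$, $\tilde K^\bullet$ degenerates to a two-term Koszul complex along the single remaining variable $l_i$, and the ratio of the two acyclicity trivializations is the determinant of multiplication by $g$ on the finite flat $\mathcal{O}_S$-algebra $f_*\mathcal{O}_{Z\cap Y_k}$, which is by definition ${\rm Norm}_{Y_k/S}(g)^{n_k}$.

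I expect the main obstacle to be precisely this local norm/residue computation, together with careful bookkeeping of the Koszul signs and of the grading conventions in the Knudsen--Mumford determinant. Once this transition identity is established, the symmetry under permutations of the $L_i$, the bi-multiplicativity, and the compatibility with base change of the map $\varphi$ all follow immediately from the naturality of the Koszul construction, so the locally defined $\eta$'s glue to the desired canonical global isomorphism $\varphi : \langle L_0,\ldots,L_n\rangle \to {\rm Det}(\otimes_i (L_i - \mathcal{O}_X))$.
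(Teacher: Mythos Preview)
Your overall strategy---build local trivializations of ${\rm Det}(\otimes_i(L_i-\mathcal{O}_X))$ from the same data $\{l_i\}$ that trivialize the Deligne pairing, then match transition functions via a norm computation---is exactly the paper's strategy in outline, and the paper's Lemma~\ref{l-t} is precisely the transition identity you are aiming for. But there is a genuine gap in your Koszul construction as written: the $l_i$ in the definition of $\langle L_0,\ldots,L_n\rangle$ are \emph{rational} sections, so ${\rm div}(l_i)$ need not be effective and $l_i$ may have poles. The arrow ``$\mathcal{O}_X\xrightarrow{\,l_i\,}L_i$'' is then not a morphism of sheaves on all of $f^{-1}(U)$, your $\tilde K^\bullet$ is not a bounded complex of locally free sheaves there, and neither the acyclicity argument nor the Knudsen--Mumford trivialization applies. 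One cannot simply insist on regular $l_i$: a line bundle restricting to negative degree on the fibers has no nonzero global regular section.

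The paper confronts exactly this point. It writes each $L_i$ as $\mathcal{O}_X(D_i^+-D_i^-)$ with both $D_i^\pm$ effective and in suitably general position, and then uses the $K$-group identity of Lemma~\ref{gro} to express $L_i-\mathcal{O}_X$ in terms of structure sheaves of the \emph{effective} divisors $D_i^\pm$ (plus a term $\mathcal F$ supported in codimension $\geq 2$). This yields honest trivializations of the determinant over a possibly smaller open $S_0\subset S$, and the transition function in Lemma~\ref{l-t} becomes a product of norms indexed by all sign patterns $\sigma\in\{+,-\}^n$, matching \eqref{trans}. Your Koszul idea can be salvaged along the same lines---replace each factor $[\mathcal{O}_X\to L_i]$ by a pair of honest Koszul factors built from regular sections cutting out $D_i^+$ and $D_i^-$---but once you do this, the combinatorics (and the shrinking of the base) is essentially that of the paper.
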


We begin with the following observation:

\begin{lemma}\label{gro}
Let $D'$ and $D''$ be effective divisors on $X$ such that the
intersection $Y\,:=\, D'\bigcap
D''$ does not contain any divisor. Then the following equality of
elements of $K(X)$ holds:
\begin{equation}\label{mul}
\cO_X(D'-D'') - \cO_X= \cO_{D'}(D') - \cO_{D''} - \cF\, ,
\end{equation}
where $\cF$ is supported on $Y$.
\end{lemma}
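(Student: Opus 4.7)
The plan is to compute both sides directly in $K(X)$ using only the standard two-term resolutions of structure sheaves of effective Cartier divisors, together with their twists. Because I will only ever tensor by the locally free sheaves $\cO_X(D')$ and $\cO_X(D'-D'')$, the sequences I write down remain exact on the nose, so no Tor corrections appear and the whole computation stays at the level of honest short exact sequences in $K(X)$.

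First, I would twist the sequence $0 \to \cO_X(-D'') \to \cO_X \to \cO_{D''} \to 0$ by $\cO_X(D')$ to obtain
$0 \to \cO_X(D'-D'') \to \cO_X(D') \to \cO_{D''}(D') \to 0$,
yielding $[\cO_X(D'-D'')] = [\cO_X(D')] - [\cO_{D''}(D')]$. Combining this with the twist $0 \to \cO_X \to \cO_X(D') \to \cO_{D'}(D') \to 0$ of the analogous sequence for $D'$, which gives $[\cO_X(D')] - [\cO_X] = [\cO_{D'}(D')]$, I arrive at the intermediate identity
$\cO_X(D'-D'') - \cO_X = \cO_{D'}(D') - \cO_{D''}(D')$
in $K(X)$. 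To finish, I must replace $\cO_{D''}(D')$ by $\cO_{D''}$ at the cost of an error supported on $Y$. Here the hypothesis that $Y$ contains no divisor is decisive: it forces $Y$ to have codimension at least two in $X$, so no irreducible component of $D''$ lies in $D'$, and consequently $D'|_{D''}$ is a well-defined effective Cartier divisor on $D''$ with underlying scheme $Y$. The associated sequence
$0 \to \cO_{D''} \to \cO_{D''}(D') \to \cO_Y(D') \to 0$
on $D''$ then gives $\cO_{D''}(D') - \cO_{D''} = \cO_Y(D')$ in $K(X)$, and substituting above yields \eqref{mul} with $\cF := \cO_Y(D')$, a sheaf manifestly supported on $Y$.

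The step I expect to require the most care is the extraction of $D'|_{D''}$ as a Cartier divisor on $D''$, equivalently the verification that $\cO_X(-D')|_{D''}$ is an invertible ideal sheaf cutting out the scheme $Y \subset D''$; this is precisely where the codimension hypothesis on $Y$ is consumed. Once that is in hand, the remainder is a purely formal manipulation of the three short exact sequences displayed above inside $K(X)$.
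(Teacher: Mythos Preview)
Your argument is correct and computes the same thing as the paper, but the paper compresses it into a single multiplicative identity in $K(X)$: from
\[
\cO_X(D'-D'') - \cO_X \,=\, (\cO_X(D')-\cO_X)(\cO_X(-D'')-\cO_X) + (\cO_X(D')-\cO_X) + (\cO_X(-D'')-\cO_X)
\]
one reads off \eqref{mul} directly, taking $\cF$ to be (up to sign) the product term, which is supported on $Y$ simply because it is a product of a class supported on $D'$ with one supported on $D''$. Your three short exact sequences are exactly what one needs to identify each of these summands concretely, and your extra step even produces $\cF$ as an honest sheaf $\cO_Y(D')$ rather than a virtual class. One caution on that last step: left-exactness of $0 \to \cO_{D''} \to \cO_{D''}(D') \to \cO_Y(D') \to 0$ requires the local equation of $D'$ to be a non-zero-divisor on $\cO_{D''}$, which the bare hypothesis ``$Y$ contains no divisor'' does not by itself guarantee on an arbitrary integral $X$ (it does if $X$ is Cohen--Macaulay). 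The paper's formulation sidesteps this by remaining virtual throughout; alternatively, you can simply observe that the kernel of $\cO_{D''} \to \cO_{D''}(D')$ is in any case supported on $Y$, so the conclusion survives.
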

\begin{proof}
This follows immediately from the identity
$$
\cO_X(D'-D'')-\cO_X \hspace{11cm}\strut
$$
$$
= \big(\cO_X(D')-\cO_X\big)\otimes\big(\cO_X(-D'')-
\cO_X\big) +
\big(\cO_X(D')-\cO_X\big) +\big(\cO_X(-D'')-\cO_X\big)
$$
with $\cF= \big(\cO_X(D')-\cO_X\big)\otimes\big(\cO_X(-D'')-
\cO_X\big)$.
\end{proof}

We will describe the determinant line bundle ${\rm
Det}(\otimes_{i=0}^{n} (L_i- {\mathcal O}_{X}))$ in terms of
local trivializations and transition
functions. For that purpose, we construct a covering of the base $S$
by Zariski open
subsets $S'$ over which each line bundle $L_i$, $i\, \in\, [0\, ,n]$,
is given by a divisor $D^+_i-D^-_i$, where both $D^+_i$ and
$D^-_i$ are effective, such that the following conditions hold:
any intersection of $n$ hypersurfaces in the union of all these
divisors is reduced, the intersection is of the expected codimension,
and it is finite over the base.

We note that if $Z$ is an intersection
of $n$ hypersurfaces in the union these divisors, then $f(Z)$ is
contained in a divisor on $S$ because $Z$ is of expected codimension.
This choice of divisors for the line bundles gives
a trivialization of ${\rm Det}(\otimes_{i=0}^{n} (L_i-
{\mathcal O}_{X}))$ over the complement of the union of all
$f(Z_I)$, where $I$ runs over the set of $n$ hypersurfaces in the union
the divisors. This open subset, which will be denoted by $S_0$, is
nonempty because each $f(Z_I)$ is
of codimension at least one. There is a trivialization
\begin{equation}\label{g}
\lambda_1\, \in\, H^0(S_0,\, {\rm Det}(\otimes_{i=0}^{n} (L_i-
{\mathcal O}_{X}))\vert_{S_0})\, .
\end{equation}

We now pick a rational function
\begin{equation}\label{g0}
g\, \in\, H^0(X'\, , {\mathcal O}_{X'})
\end{equation}
on $X$. We assume that the divisor $(g)\,:=\, \text{div}(g)$ can be
included in the above
system of divisors so that the above properties continue to hold for
the enlarged system. The line bundle $L_0$ is given by the
divisor $D^+_0-D^-_0+(g)$. Let
\begin{equation}\label{g1}
\lambda_2\, \in\, H^0(S'_0,\, {\rm Det}(\otimes_{i=0}^{n} (L_i-
{\mathcal O}_{X}))\vert_{S_0})
\end{equation}
be the trivialization obtained by replacing $D^+_0-D^-_0$ with
$D^+_0-D^-_0+(g)$ in the construction of the trivialization
$\lambda_1$ in \eqref{g}. Let
\begin{equation}\label{t}
t\,:=\, \lambda_2\otimes \lambda_1^*\, \in\, H^0(S_0\cap S'_0,
\, {\mathcal O}^*_{S_0\cap S'_0})
\end{equation}
be the transition function. For convenience, $S_0\bigcap S'_0$
will be denoted by $S'$.

\begin{lemma}\label{l-t}
The transition function $t$ in \eqref{t} has the following expression:
$$
t= \prod_{\sigma\in \{+,- \}^n} {\rm
Norm}_{Y_\sigma/S'}(g)^{n_\sigma}\, ,
$$
where $Y_\sigma$ and $n_\sigma$ are defined above, and
$g$ is the function in \eqref{g0}.
\end{lemma}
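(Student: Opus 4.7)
The plan is to expand $\bigotimes_{i=0}^{n}(L_i-\mathcal{O}_X)$ in the Grothendieck group $K(X)$ via Lemma~\ref{gro}, and then analyze how the induced trivialization of the determinant bundle transforms when the divisorial description of $L_0$ is modified by the principal divisor $(g)$.

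First, I would apply Lemma~\ref{gro} to each factor $L_i-\mathcal{O}_X$ for $i=1,\ldots,n$ (with $D'=D_i^+$ and $D''=D_i^-$), leaving the $L_0-\mathcal{O}_X$ factor intact. By the transversality built into the chosen divisor system, the resulting product expands modulo sheaves with support of codimension $\ge n+2$ in $X$ as
$$
\bigotimes_{i=0}^{n}(L_i-\mathcal{O}_X)\;\equiv\;(L_0-\mathcal{O}_X)\otimes\sum_{\sigma\in\{+,-\}^n}(-1)^{|\{i:\sigma_i=-\}|}\mathcal{H}_\sigma,
$$
where $\mathcal{H}_\sigma$ is a line bundle on the codimension-$n$ intersection $Y_\sigma=\bigcap_{i=1}^{n}D_i^{\sigma_i}$, finite over $S'$. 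The omitted remainders project into codimension $\ge 2$ of $S$; since $t\in H^0(S',\mathcal{O}^{*}_{S'})$ is a rational unit determined by its behavior on codimension-$1$ substrata of $S$, these remainders contribute trivially.

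Next, for each $\sigma\in\{+,-\}^n$ I would invoke the standard identification on the finite $S'$-scheme $Y_\sigma$,
$$
\mathrm{Det}\bigl((L_0-\mathcal{O}_X)\otimes\mathcal{H}_\sigma\bigr)\;\cong\;\mathrm{Norm}_{Y_\sigma/S'}(L_0|_{Y_\sigma}),
$$
obtained by combining $\det f_{*}(L\otimes\mathcal{H}_\sigma)\otimes\det f_{*}(\mathcal{H}_\sigma)^{-1}\cong\mathrm{Norm}_{Y_\sigma/S'}(L)$ for any line bundle $L$. Under this identification, a rational section $s_0$ of $L_0$ induces the trivialization $\mathrm{Norm}_{Y_\sigma/S'}(s_0|_{Y_\sigma})$; replacing $s_0$ by $gs_0$ multiplies this by exactly $\mathrm{Norm}_{Y_\sigma/S'}(g)$. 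Taking the product of these factors, raised to the signs $n_\sigma=(-1)^{|\{i:\sigma_i=-\}|}$ produced by the K-theoretic expansion, yields the claimed formula for $t$.

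The main obstacle, I expect, is the bookkeeping: carefully verifying that the higher-codimension remainders really do give a unit on $S'$ whose divisor is supported in codimension $\ge 2$ of $S$ (hence trivial), and tracking the signs through the expansion. A subtle point is that only the $L_0$ summand of $L_0-\mathcal{O}_X$ feels the rescaling $s_0\mapsto gs_0$ (the $\mathcal{O}_X$ summand is insensitive), so each $\sigma$ contributes a single norm factor $\mathrm{Norm}_{Y_\sigma/S'}(g)^{n_\sigma}$; this is precisely why the product is indexed by $\{+,-\}^n$ rather than by the a priori larger set $\{+,-\}^{n+1}$.
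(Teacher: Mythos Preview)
The paper does not actually prove Lemma~\ref{l-t}: this is an announcement, and immediately after the statement the authors write ``Theorem~\ref{thm1} is proved using Lemma~\ref{l-t}. The details will appear elsewhere.'' So there is no proof in the paper to compare against. That said, your outline is very much in the spirit of the paper's setup: it uses precisely the tool the authors provide (Lemma~\ref{gro}) to reduce $L_i-\cO_X$ to a signed sum of sheaves on the $D_i^{\pm}$ plus a codimension-two remainder, and then invokes the standard identification of $\mathrm{Det}$ over a finite base with the norm. Your reading of the (undefined in the text) symbols $Y_\sigma=\bigcap_{i=1}^n D_i^{\sigma_i}$ and $n_\sigma=(-1)^{|\{i:\sigma_i=-\}|}$ is the natural one and is consistent with the transition rule \eqref{trans} for the Deligne pairing that the argument is meant to reproduce.

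As a proof sketch your proposal is sound, but two points would need tightening in a full write-up. First, the ``higher-codimension remainders contribute trivially'' step is doing real work: you need that the corresponding determinant factors, and more importantly the \emph{change} in their trivializations when $D_0^+-D_0^-$ is replaced by $D_0^+-D_0^-+(g)$, are units on $S'$ with no zeros or poles along the added divisors; this uses that the enlarged divisor system including $(g)$ still has intersections of expected codimension, an assumption the paper makes explicitly. Second, the identification $\det f_*(L\otimes\cH_\sigma)\otimes\det f_*(\cH_\sigma)^{-1}\cong\mathrm{Norm}_{Y_\sigma/S'}(L)$ requires $Y_\sigma\to S'$ to be finite and flat (or at least that one works over the locus where it is), and one must check that the specific sheaf $\cH_\sigma$ arising from the tensor product of the $\cO_{D_i^+}(D_i^+)$ and $\cO_{D_i^-}$ is indeed a line bundle on $Y_\sigma$; the reducedness and expected-codimension hypotheses on the divisor system are exactly what make this go through.
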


Theorem \ref{thm1} is proved using Lemma \ref{l-t}. The details will
appear elsewhere.

\section{Some applications of Theorem \ref{thm1}}

Let $f\, :\, X\, \longrightarrow\, S$ be as before. Take 
$n+2$ line bundles $L_0\, ,\cdots\, , L_{n+1}$ on $X$, where
$n\,=\, \dim X -\dim S$.

\begin{corollary}\label{cor1}
The line bundle ${\rm Det}(\otimes_{i=0}^{n+1} (L_i-
{\mathcal O}_{X}))$ on $S$ has a canonical trivialization.
\end{corollary}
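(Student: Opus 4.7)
The plan is to deduce Corollary \ref{cor1} from Theorem \ref{thm1} combined with the bi-multiplicativity of the Deligne pairing. The starting point is the $K$-theoretic identity, valid in $K(X)$ for any two line bundles $L,L'$:
\begin{equation*}
L\otimes L' -\cO_X \;=\; (L-\cO_X)+(L'-\cO_X)+(L-\cO_X)\otimes(L'-\cO_X)\, .
\end{equation*}
Applying this to $L=L_0$ and $L'=L_{n+1}$, and then tensoring with $\bigotimes_{i=1}^{n}(L_i-\cO_X)$, I obtain an equality in $K(X)$ between
\[
(L_0\otimes L_{n+1}-\cO_X)\otimes\bigotimes_{i=1}^n(L_i-\cO_X)
\]
and the sum of the three terms
\[
(L_0-\cO_X)\otimes\bigotimes_{i=1}^n(L_i-\cO_X),\quad (L_{n+1}-\cO_X)\otimes\bigotimes_{i=1}^n(L_i-\cO_X),\quad \bigotimes_{i=0}^{n+1}(L_i-\cO_X)\, .
\]

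Next I would apply the determinant of cohomology functor ${\rm Det}$, which is additive on short exact sequences of coherent sheaves, and therefore takes the $K$-group sum to a tensor product of line bundles on $S$. Invoking Theorem \ref{thm1} for the three tuples $(L_0\otimes L_{n+1},L_1,\ldots,L_n)$, $(L_0,L_1,\ldots,L_n)$ and $(L_{n+1},L_1,\ldots,L_n)$, this translates into a canonical isomorphism
\[
\langle L_0\otimes L_{n+1},L_1,\ldots,L_n\rangle \;\cong\; \langle L_0,L_1,\ldots,L_n\rangle\otimes\langle L_{n+1},L_1,\ldots,L_n\rangle\otimes{\rm Det}\!\Bigl(\bigotimes_{i=0}^{n+1}(L_i-\cO_X)\Bigr)\, .
\]

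Finally, I would compare this with the multiplicativity property of the Deligne pairing recalled in the introduction, which provides a second canonical isomorphism
\[
\langle L_0\otimes L_{n+1},L_1,\ldots,L_n\rangle \;\cong\; \langle L_0,L_1,\ldots,L_n\rangle\otimes\langle L_{n+1},L_1,\ldots,L_n\rangle\, .
\]
Taking the difference of the two identifications yields a canonical trivialization of ${\rm Det}(\bigotimes_{i=0}^{n+1}(L_i-\cO_X))$, as required.

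The step I expect to be the main obstacle is the last one: not the \emph{existence} of the trivialization, which is essentially formal given the two isomorphisms above, but verifying that the isomorphism coming from the additivity of ${\rm Det}$ on $K$-theoretic sums \emph{agrees on the nose} with the one coming from the bi-multiplicativity of the Deligne pairing under the identification of Theorem \ref{thm1}. This is a compatibility/coherence check between two functorial constructions, and I would handle it by unwinding both sides in terms of the local trivializations and transition functions from Section \ref{sec2}, using Lemma \ref{gro} to identify the contributions of the cross term $(L_0-\cO_X)\otimes(L_{n+1}-\cO_X)\otimes\bigotimes_{i=1}^{n}(L_i-\cO_X)$ with the discrepancy between Norm expressions.
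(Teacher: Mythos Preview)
Your proposal is correct and follows essentially the same route as the paper's proof: both combine the $K$-theoretic identity $(L\otimes L'-\cO_X)=(L-\cO_X)+(L'-\cO_X)+(L-\cO_X)\otimes(L'-\cO_X)$, Theorem~\ref{thm1} applied to three $(n{+}1)$-tuples, and the multiplicativity of the Deligne pairing, to produce the trivialization. The only cosmetic differences are that the paper merges $L_0$ and $L_1$ (keeping $L_2,\ldots,L_{n+1}$ as the remaining slots) rather than $L_0$ and $L_{n+1}$, and it starts from the Deligne-pairing multiplicativity and then applies Theorem~\ref{thm1}, whereas you start from the $K$-group identity and apply ${\rm Det}$ first; these are the same argument run in opposite directions.

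One remark: the coherence issue you flag in your last paragraph --- whether the isomorphism of Theorem~\ref{thm1} intertwines the multiplicativity of the Deligne pairing with the additivity of ${\rm Det}$ --- is not addressed in the paper's proof either; it is taken for granted. So your proof is at least as complete as the paper's, and your instinct that this is where the real work hides is sound, even if the paper elides it.
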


\begin{proof}
We know that
$$
\langle L_0\, , L_2\, ,L_3\, , \cdots\, ,L_{n+1}\rangle \otimes \langle 
L_1\, , 
\cdots\, ,L_{n+1}\rangle\,=\, \langle L_0\otimes L_1\, , L_2\, ,\cdots\, 
,L_{n+1}\rangle
$$
\cite{Zh}. Therefore, from Theorem \ref{thm1},
$$
 {\rm Det}((L_0\otimes L_1 -{\mathcal O}_{X})\otimes_{i=2}^{n+1}
(L_i- {\mathcal O}_{X}))= {\rm Det}((L_0 -{\mathcal 
O}_{X})\otimes_{i=2}^{n+1}
(L_i- {\mathcal O}_{X})) \otimes {\rm Det}(\otimes_{i=1}^{n+1} (L_i- 
{\mathcal O}_{X}))\, .
$$
As $(L_0\otimes L_1 -{\mathcal O}_{X}) - (L_0 -{\mathcal
O}_{X}) - (L_0 -{\mathcal O}_{X})\,=\,
L_0\otimes L_1 - L_0 -L_1 +{\mathcal O}_{X}$,
this isomorphism gives a trivialization of
$$
{\rm Det}((L_0\otimes L_1 - L_0 -L_1 + {\mathcal 
O}_{X})\otimes_{i=2}^{n+1} (L_i- {\mathcal O}_{X}))\, .
$$
Since
$$
{\rm Det}((L_0\otimes L_1 - L_0 -L_1 + {\mathcal
O}_{X})\otimes_{i=2}^{n+1} (L_i- {\mathcal O}_{X}))\,=\,
{\rm Det}(\otimes_{i=0}^{n+1} (L_i-
{\mathcal O}_{X}))\, ,
$$
we get a trivialization of
${\rm Det}(\otimes_{i=0}^{n+1} (L_i-{\mathcal O}_{X}))$.
\end{proof}

Fix a relative K\"ahler form $\omega_{X/S}$ on the fibration $f$ in
\eqref{e1}. By definition, for some open covering $\{U_i\}$ of $S$, there
exist K\"ahler forms $\omega_{f^{-1}U_i}$ on ${f^{-1}U_i}$ which induce
the relative real $(1,1)$-form $\omega_{X/S}$ on $f^{-1}U_i$. If $S$ is
singular, we require that a K\"ahler form possesses locally a
$\partial\overline\partial$--potential on some smooth ambient space.

If $F$ is a vector bundle on $X$ equipped with a hermitian structure
$h_F$, then there is a natural hermitian structure on the line bundle
${\rm Det}(F)\,\longrightarrow\, S$
which is constructed using $h_F$ and $\omega_{X/S}$
\cite{Qu}, \cite{BGS}; it is known as the \textit{Quillen metric}. If
$F_1$ and $F_2$ are vector bundles equipped with hermitian structure, then
the hermitian structures on ${\rm Det}(F_1)$ and ${\rm Det}(F_2)^*$
together induce a hermitian structure on ${\rm Det} (F_1-F_2)\, =\, {\rm
Det}(F_1)\otimes {\rm Det}(F_2)^*$.

For each $l\, \in\, [0\, ,n]$, fix a hermitian metric $h_j$ on the line
bundle $L_j$ over $X$. These $h_j$ produce a hermitian metric on $\langle
L_0\, , \cdots\, ,L_{n}\rangle$ \cite{De}, \cite[\S~1.2]{Zh}. Therefore,
both the line bundles ${\rm Det}(\otimes_{i=0}^{n} (L_i- {\mathcal
O}_{X}))$ and $\langle L_0\, , \cdots\, ,L_{n}\rangle$ are equipped with a
hermitian metric.

\begin{proposition}\label{thm2}
The curvature of the hermitian metric on
$\langle L_0\, , \cdots\, ,L_{n}\rangle$ coincides with the
curvature of the Quillen metric on ${\rm Det}(\otimes_{i=0}^{n}
(L_i- {\mathcal O}_{X}))$.
\end{proposition}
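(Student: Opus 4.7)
The plan is to compute both curvature forms independently via the standard formulas and observe that they agree on the nose, without using the isomorphism $\varphi$ of Theorem \ref{thm1} (which need not be an isometry). For the Deligne pairing side, I would invoke Deligne's classical computation: if each $L_i$ is equipped with a hermitian metric $h_i$ with first Chern form $c_1(L_i,h_i)$, then the curvature of the induced metric on $\langle L_0,\ldots,L_n\rangle$ is
\begin{equation*}
c_1\bigl(\langle L_0,\ldots,L_n\rangle\bigr)\,=\,f_{*}\!\left(\prod_{i=0}^{n}c_1(L_i,h_i)\right),
\end{equation*}
where $f_{*}$ denotes integration along the fibers of $f$. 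This formula is already in \cite{De} in relative dimension one and is extended to arbitrary relative dimension in \cite{Zh}, so I would cite it rather than rederive it.

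For the Quillen metric side, I would apply the Bismut--Gillet--Soulé curvature theorem for the determinant of cohomology \cite{BGS}: for any hermitian (virtual) bundle $F$ on $X$, the first Chern form of the Quillen metric on ${\rm Det}(F)$ is the $(1,1)$-component of
\begin{equation*}
f_{*}\bigl({\rm ch}(F,h_F)\cdot {\rm Td}(T_{X/S},\omega_{X/S})\bigr).
\end{equation*}
The key observation is that, for the virtual bundle $F\,=\,\bigotimes_{i=0}^{n}(L_i-\mathcal{O}_X)$, multiplicativity of the Chern character gives
\begin{equation*}
{\rm ch}(F,h_F)\,=\,\prod_{i=0}^{n}\bigl(e^{c_1(L_i,h_i)}-1\bigr),
\end{equation*}
and each factor $e^{c_1(L_i,h_i)}-1$ starts in form-degree $2$. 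Hence ${\rm ch}(F,h_F)$ has no nonzero component in form-degree less than $2(n+1)$, and its degree-$2(n+1)$ part equals $\prod_{i=0}^{n}c_1(L_i,h_i)$.

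From here the proof is mechanical: since $f$ has relative complex dimension $n$, the fiber integration $f_{*}$ lowers form-degree by $2n$, so the $(1,1)$-component on $S$ can only come from the degree-$2(n+1)$ part of the integrand. The Todd class expansion ${\rm Td}(T_{X/S},\omega_{X/S})\,=\,1+\tfrac{1}{2}c_1(T_{X/S})+\cdots$ therefore contributes only through its constant term, and the higher Todd terms drop out. This yields
\begin{equation*}
c_1\bigl({\rm Det}(F),h_Q\bigr)\,=\,f_{*}\!\left(\prod_{i=0}^{n}c_1(L_i,h_i)\right),
\end{equation*}
which coincides with the Deligne pairing curvature computed above. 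The only real obstacle I anticipate is bookkeeping: checking that the BGS formula applies to the virtual bundle $\bigotimes(L_i-\mathcal{O}_X)$ with the hermitian structure induced by $h_0,\ldots,h_n$ and the trivial metric on $\mathcal{O}_X$, and that the sign and normalization conventions used in \cite{De}, \cite{Zh} and in \cite{BGS}, \cite{Qu} match. Once conventions are aligned, the degree-counting argument above makes the proof immediate.
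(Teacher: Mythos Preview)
Your proposal is correct and follows essentially the same route as the paper: both sides are computed via the standard formulas (Zhang's fiber-integral expression for the Deligne pairing curvature and the Bismut--Gillet--Soul\'e formula for the Quillen curvature), and then a degree count using $ch(L_i-\mathcal{O}_X)=c_1(L_i,h_i)+\text{higher order}$ shows that only the constant term of the Todd class contributes, forcing the two expressions to agree. Your write-up is slightly more explicit about the degree bookkeeping, but the argument is the same.
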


\begin{proof}
The Chern form of the metric on $\langle L_0,\ldots,L_n\rangle$ equals the
fiber integral
\begin{equation}\label{del}
\int_{X/S} c_1(L_0,h_0)\wedge \ldots \wedge c_1(L_n,h_n)
\end{equation}
(see \cite{Zh}). On the other hand, a theorem of Bismut, Gillet and
Soul\'e \cite{BGS} says that the Chern form of the determinant line
bundle is the degree two component of the Riemann--Roch fiber
integral
\begin{equation}\label{ht2}
c_1({\rm Det}(\otimes_{i=0}^{n}
(L_i- {\mathcal O}_{X})),h^Q)\,=\,
\Big(\int_{X/S}ch(L_0-\cO_X,h_0)\cdot\ldots\cdot
ch(L_n-\cO_X,h_n)td(X/S)\Big)_{(2)}\, ,
\end{equation}
where $h^Q$ is the Quillen metric on ${\rm Det}(\otimes_{i=0}^{n}
(L_i- {\mathcal O}_{X}))$ (this theorem of \cite{BGS}  
was extended to (smooth) K\"ahler fibrations
over singular base spaces in \cite[\S 12]{f-s}).

Note that
$$
ch(L-\cO_X) \,=\, c_1(L,h) + ~\text{~higher~order~terms}\, .
$$
Hence the only contribution of $td(X/S)$ in \eqref{ht2}
is the constant one, and also the higher order terms in
$ch(L-\cO_X)$ do not contribute.
Consequently, \eqref{ht2} coincides with \eqref{del}.
\end{proof}

Let $X \,\longrightarrow\, S$ be a projective family of canonically
polarized varieties.
Equip the relative canonical bundle $K_{X/S}$ with the hermitian metric
that is induced by the fiberwise K\"ahler-Einstein metrics.
It was shown in \cite{f-s} that the generalized Weil-Petersson
form is equal, up to a
numerical factor, to the fiber integral
$$
\omega_{WP}\simeq \int_{X/S} c_1(K_{X/S},h)^{n+1}\, .
$$
Therefore, we have the following:

\begin{proposition}
Let $X \,\longrightarrow\, S$ be a projective family of canonically
polarized varieties.
The curvature of the metric on the Deligne pairing
$\langle K_{X/S},\ldots,K_{X/S}\rangle$ given by the
fiberwise K\"ahler-Einstein metric coincides with the generalized
Weil-Petersson form $\omega_{WP}$ on $S$.
\end{proposition}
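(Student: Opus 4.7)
The plan is to reduce the statement to the two ingredients that are essentially already in hand: the curvature formula for the Deligne pairing metric recorded in \eqref{del}, and the integral representation of the generalized Weil--Petersson form cited from \cite{f-s} immediately before the proposition. No further geometric input is required.

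First I would specialize \eqref{del} to the case $L_0=L_1=\cdots=L_n=K_{X/S}$, all equipped with the hermitian metric $h$ induced by the fiberwise K\"ahler--Einstein metrics. This is legitimate because the proof of the Chern form identity for the Deligne pairing in \cite{Zh} only uses that each $L_i$ carries a smooth hermitian structure, so repetition of the same bundle and metric is allowed. The output is the identity
\begin{equation*}
c_1\bigl(\langle K_{X/S},\ldots,K_{X/S}\rangle,\, h_{\langle\cdot\rangle}\bigr)
\,=\, \int_{X/S} c_1(K_{X/S},h)^{n+1},
\end{equation*}
where $h_{\langle\cdot\rangle}$ is the Deligne metric built from $(K_{X/S},h)$.

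Next I would invoke the formula
\begin{equation*}
\omega_{WP}\;\simeq\; \int_{X/S} c_1(K_{X/S},h)^{n+1}
\end{equation*}
proved in \cite{f-s}, which is already quoted in the paragraph preceding the proposition. Chaining the two identities yields $c_1(\langle K_{X/S},\ldots,K_{X/S}\rangle,h_{\langle\cdot\rangle}) = \omega_{WP}$ up to the same numerical factor, which is exactly the assertion.

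There is no genuine obstacle: both inputs are cited results, and the proposition is little more than their juxtaposition once one recognizes that the fiberwise K\"ahler--Einstein metric on $K_{X/S}$ is an admissible hermitian structure on the line bundle to which the Deligne pairing metric construction applies. The only point requiring any care is that the ``$\simeq$'' in the Weil--Petersson formula hides a dimension-dependent constant, so strictly speaking the conclusion should be read as equality up to this same constant; one should check that the normalization conventions for $\omega_{WP}$ used in \cite{f-s} and for the Deligne metric used in \cite{Zh} are compatible, but this is a bookkeeping matter rather than a mathematical difficulty.
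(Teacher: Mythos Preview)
Your proposal is correct and matches the paper's own argument: the proposition is stated immediately after the fiber-integral formula for $\omega_{WP}$ from \cite{f-s} with the connective ``Therefore,'' so the intended proof is precisely the specialization of \eqref{del} to $L_0=\cdots=L_n=K_{X/S}$ with the K\"ahler--Einstein metric followed by the citation of \cite{f-s}. Your remark about the hidden numerical constant is apt and consistent with the paper's use of ``$\simeq$'' and the phrase ``up to a numerical factor.''
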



\end{document}